\newcommand{\Rnum}[1]{\expandafter\@slowromancap\romannumeral #1@}
\newcommand{\di}{\mathcal{DI}}
\newcommand{\tr}{\mathcal{TR}}
\newcommand{\te}{\mathcal{TE}}
\newcommand{\he}{\mathcal{HE}}
\newtheorem{theorem}{Theorem}[section]
\newtheorem{lemma}[theorem]{Lemma}
\theoremstyle{definition}
\newtheorem{proposition}[theorem]{Proposition}
\newtheorem{remark}[theorem]{Remark}
\title{Cubic tessellations of the helicosms}
\author{Isabel Hubard, Mark Mixer, Daniel Pellicer, Asia Ivi\'c Weiss}
\begin{document}
\begin{abstract}
Up to isomorphism there are six fixed-point free crystallographic groups in Euclidean Space generated by twists (screw motions).
In each case, an orientable 3-manifold is obtained as the quotient of $\mathbb{E}^3$ by such a group.
The cubic tessellation of $\mathbb{E}^3$ induces tessellations on each such manifold.  These tessellations of the 3-torus and the didicosm were classified as `equivelar toroids' and `cubic tessellation of the didicosm' in previous works.
This paper concludes the classification of cubic tessellations on the remaining four orientable manifolds.
\end{abstract}

\maketitle

\section{Introduction}

Among the seventeen crystallographic groups of the Euclidean plane, there are only two that act with no fixed-points.  Identifying points in the same orbit of the action of these groups leads to the well-known flat, closed 2-manifolds which are commonly known as the torus and the Klein bottle.  The three regular tessellations of the Euclidean plane, in a natural way, give rise to the equivelar tessellations of these surfaces.  These tessellations have been of interest to many (see for example \cite{Edmonds, Kurth, Thomassen}), and have a long history which includes the classic work of Coxeter \cite{Coxeter48} (see also \cite{CM}), giving the classification of regular and chiral maps on the torus.   The topic has also drawn more recent attention, with the works of Brehm, K\"uhnel \cite{BK}, and of Wilson \cite{unclesteve}, classifying all equivelar maps on the torus and Klein bottle, respectively.

There are precisely ten flat, closed 3-manifolds (see \cite{HW, Nowacki}).  Each manifold arises as the quotient of $\mathbb{E}^3$ by a fixed-point free crystallographic group, and thus can be seen in the broader setting of space-forms.  In fact, the Euclidean space-forms are the complete connected flat riemannian manifolds; for more details on this topic see \cite{ARP} and \cite{Wolf}.

Various properties of the flat, closed 3-manifolds have been studied recently (see for example \cite{CR2}, \cite{DR}, \cite{Isangulov}).  In \cite{CR1} Conway and Rossetti coin the name \textit{platycosms} for these manifolds, and give them a set of individual names. Six of these manifolds are orientable and they can be grouped into platycosms satisfying one of three properties. The 3-torus is the most symmetric platycosm in the sense that it has an infinite group of isometries of the manifold containing $S^1 \times S^1 \times S^1$ as a subgroup. As a consequence of this, it is the only platycosm admitting regular tessellations \cite{HMS}. In contrast, the isometry group of the didicosm (also known as the Hantzsche-Wendt manifold \cite{Wolf}) is finite and in this sense it is the least symmetric of the platicosms.

In this paper we focus on the remaining four orientable platycosm which, together with the torus, are named the helicosms. Their isometry groups contain a finite index subgroup isomorphic to $S^1$, which induces a distinguished direction.

Our interest centers on the situation when the isometry group $\mathcal{G}$ inducing a helicosm as a quotient, is taken to be a subgroup of the symmetry group $[4,3,4]$ of the regular tessellation $\mathcal{U}$ of $\mathbb{E}^3$ by cubes. The orbit-space $\mathcal{U}/\mathcal{G}$ gives rise to what we call a cubic tessellation of the corresponding helicosm.  That is to say, for each $i \in \{0,1,2,3\}$, the orbit of an $i$-dimensional face of the tessellation of $\mathbb{E}^3$ yields an $i$-dimensional face of the tessellation of the helicosm.  In non-degenerate cases, one can think of the combinatorial structure of this tessellation as an abstract polytope with Schl\"afli type $\{4,3,4\}$ (see \cite{ARP}).
Given any tessellation $\Lambda$ of $\mathbb{E}^n$, and a fixed-point free discrete group $\mathcal{G}$ of Euclidean isometries, if $\mathcal{G}$ preserves the tessellation, then we call the orbit-space of $\Lambda /\mathcal{G}$ a \textit{twistoid} on the manifold $\mathbb{E}^n/\mathcal{G}$.

The particular instance of twistoids on the $n$-torus has been previously considered, where they are called \textit{toroids}.  Regular tessellations of these manifolds were classified by McMullen and Schulte \cite{MSHigherToroidal}.  Subsequently, McMullen proved there are no chiral tessellations of the $n$-torus (for $n \geq 3$) (see \cite[Section 6H]{ARP}).  Recently the classification by symmetry type of all cubic tessellations of the 3-torus and didicosm were completed by Hubard, Orbani\'{c}, Pellicer, and Weiss \cite{TOROIDS} and Hubard, Mixer, Pellicer and Weiss \cite{TWISTOIDS1}, respectively.

This paper is the third in a sequence of four papers classifying cubic tessellations of the platycosms,  the first two  being \cite{TOROIDS} and \cite{TWISTOIDS1}.  Sections~\ref{sectionheli}, \ref{s:twistoids} and~\ref{s:orientable} of this paper  provide introductory concepts for the helicosms and their twistoids.
In Section~\ref{s:orientable} we show that there are no cubic tessellations on the hexacosm and in Sections~\ref{s:di}, \ref{s:tri} and ~\ref{s:tetra} we classify all possible cubic tessellations of the dicosm, tricosm, and tetracosm, respectively, according to the structure of their automorphism group.
 Finally, in Section~\ref{s:toroids} we relate the contents of this paper with the first one of the series.
In the final paper, we complete the classification by considering the four non-orientable manifolds.  Our approach is to use the geometry of three dimensional Euclidean space in order to understand how the fixed-point free crystallographic groups interact with the symmetries of the cubic tessellation.

\section{Flat 3-manifolds}
\label{sectionheli}

According to the notation of \cite[Chapter 24]{ConwaySOT}, the six orientable compact flat quotients of $\mathbb{E}^3$ are the didicosm and the five {\em helicosms} called torocosm (3-torus), dicosm, tricosm, tetracosm and hexacosm.
The cubical tessellations of the didicosm and the torocosm were respectively studied in \cite{TWISTOIDS1} and \cite{TOROIDS}, respectively.
In this section, we analyze the structure of the groups
   $\di := 2222 \frac{1}{2} + \frac{1}{2} + \frac{1}{2} + \frac{1}{2} +$,
   $\tr := 333 \frac{1}{3} + \frac{1}{3} + \frac{1}{3} +$,
   $\te := 442 \frac{1}{4} + \frac{1}{4} + \frac{1}{2} +$, and
   $\he := 632 \frac{1}{6} + \frac{1}{3} + \frac{1}{2} +$,
as well as the corresponding quotients of the Euclidean space yielding the dicosm, tricosm, tetracosm and hexacosm, respectively.

For each group $\mathcal G \in \{\di,\tr,\te,\he\}$, a natural set of generators for $\mathcal G$ consists of isometries called \textit{twists}, where a twist (or screw motion) is the commuting product of a rotation (of order $n \ge 2$) about a line with directional vector $v$ and the translation by $v$. We call $v$ the \textit{translational component} of the twist and say that it has \textit{rotational component of order} $n$.  If the rotational component is of order $n$, we say that the twist is an $n$-turn twist. We shall also refer to the 2-turn twists by \textit{half-turn twists}.

In all cases the groups
are generated by twists with parallel axes. The group $\di$ is generated by four half-turn twists and the group $\tr$ is generated by three $3$-turn twists. The group $\tr$ is generated by two $4$-turn twists and a half-turn twist. Finally, the group $\he$ is generated by a $6$-turn twist, a $3$-turn twist and a half-turn twist.
The intersection of the axes generating $\di$, $\tr$, $\te$, or $\he$ with a perpendicular plane form, respectively, a parallelogram, an equilateral triangle, an isoceles triangle with a straight angle, or a triangle with angles $\pi/6$, $\pi/3$ and $\pi/2$.

The translational component of all generating twists of $\di$ and of $\tr$ coincide. The translational component of the $4$-turn twists is half of the translational component of the half-turn twist generating $\te$. Finally, considering the generators of $\he$, the translational component of the $6$-turn twist is half the translational component of the $3$-turn twist and a third of the translational component of the half-turn twist.  To conclude this section we describe a fundamental region for each of the dicosm, tricosm, tetracosm and hexacosm.

\begin{figure}
\begin{center}
\includegraphics[width=7cm, height=5cm]{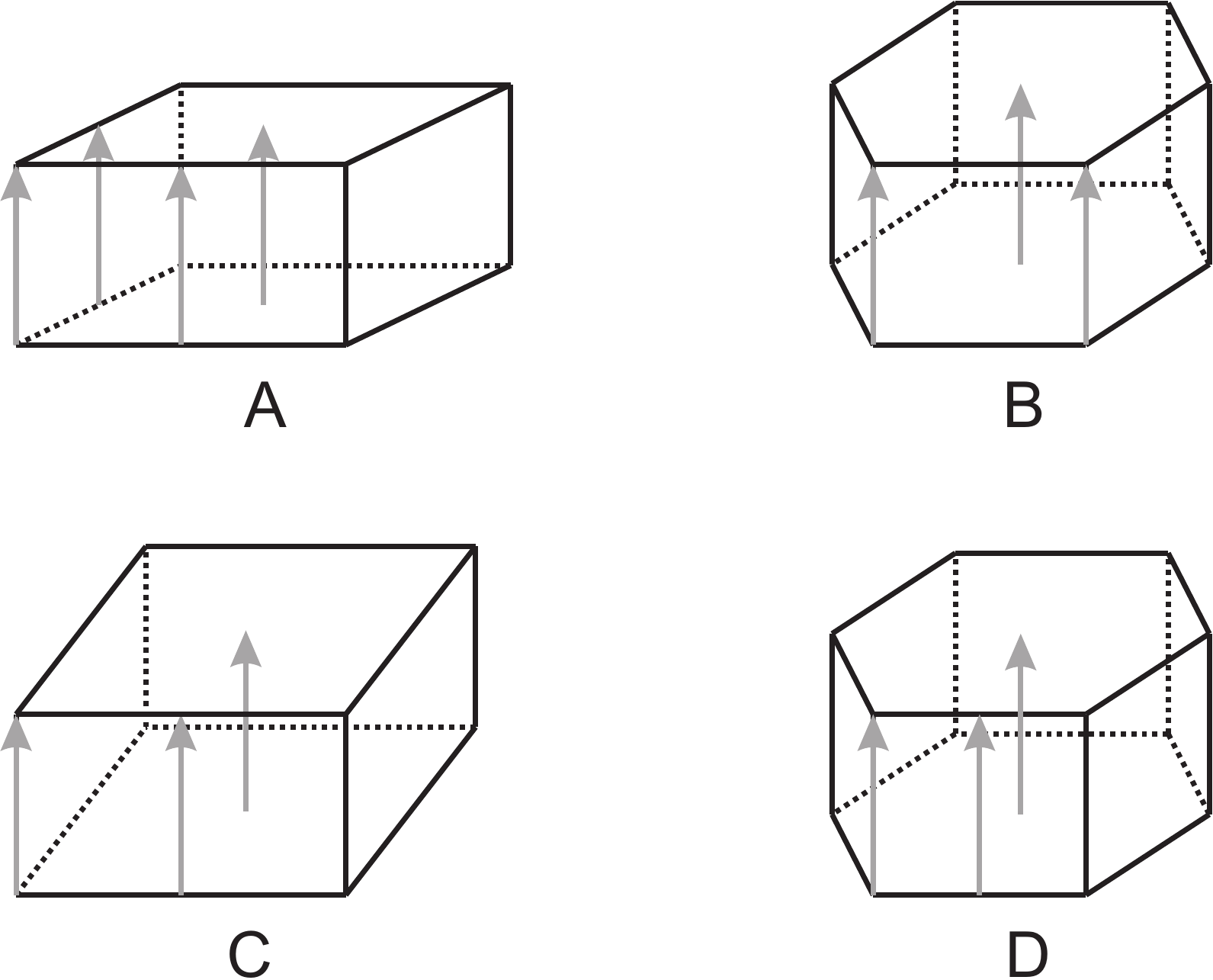}
\caption{Fundamental regions of the non-toroidal helicosms \label{fig:FundReg}}
\end{center}
\end{figure}

Figure~\ref{fig:FundReg} (A), (B), (C) and (D) shows a fundamental region for the dicosm, tricosm, tetracosm and hexacosm, respectively.
In all cases the fundamental region is an upright prism over a centrally symmetric polygon (parallelogram, regular hexagon, square and regular hexagon). This polygon is in a plane perpendicular to the direction of the translational component of the generating twists, indicated by gray arrows. The hight of the prism is the norm of the smallest translational component of the generating twists.

The axes intersecting the midpoint of edges of the prisms in Figure~\ref{fig:FundReg} (C) and (D) correspond to the generating half-turn twists. The axis containing a vertex of the prism in Figure~\ref{fig:FundReg} (D) corresponds to the generating 3-turn twist.

The identification of the fundamental regions to obtain the helicosms is as follows. Each vertical wall of the prism is identified with the opposite wall by translation; note that in all cases, a vertical wall is in a plane parallel to that of the opposite wall.
The top and bottom of the prism are identified by the twist with its axis lying in the middle; that is, the top and bottom of the prisms in Figure~\ref{fig:FundReg} (A), (B), (C) and (D) are identified by a half-turn, 3-turn, 4-turn and 6-turn twist, respectively.


\section{Twistoids and symmetries of the cubic tessellation}
\label{s:twistoids}

Up to similarity, there is a unique regular tessellation $\mathcal{U}$ of Euclidean 3-space $\mathbb{E}^3$ by cubes.   We assume that the vertices of this tessellation coincide with the integer lattice $\mathbb{Z}^3$ determined by the standard basis $\{e_1, e_2, e_3 \}$.  The group of symmetries $\Gamma(\mathcal{U})$ of this tessellation is the Coxeter group $[4,3,4]$, and $\Gamma(\mathcal{U}) \cong {\bf T} \rtimes {\bf S}$ where ${\bf T}$ is the group of all translational symmetries, and $\bf{S}$ is the stabilizer of the origin.  The group ${\bf S}$ is called the \textit{point group} of $\mathcal{U}$.

Let $\mathcal{G}$ be a fixed point free discrete group of isometries of $\mathbb{E}^3$.  When $\mathcal{G}$ is a subgroup of the symmetry group $\Gamma(\mathcal{U})$, the quotient $\mathcal{U} / \mathcal{G}$ is called a \textit{(cubic) twistoid}. Hence, a cubic twistoid can be seen as a tessellation by cubes of the platycosm associated with $\mathcal{G}$.  The twistoids  $\mathcal{U} / \mathcal{G}$ and $\mathcal{U} / \mathcal{G}'$ are said to be {\em isomorphic} if $\mathcal{G}$ and $\mathcal{G}'$ are conjugates in $\Gamma(\mathcal{U})$.  The definition of isomorphism allows us to relabel any integral point $(x,y,z)$ as $(0,0,0)$ while not changing the isomorphism class of a twistoid.
Similarly, permuting the $x$, $y,$ and $z$ axes as well as switching the positive and negative direction of any coordinate axis, does not change the isomorphism class of a twistoid.  Later, we shall make use of this freedom in the position and orientation of the integer lattice in order to define simple parameters that classify the twistoids on the helicosms.

Consider the translation $\tau$ of $\mathbb{E}^3$ by the vector $(1/2, 1/2, 1/2)$.
The {\em dual} of a twistoid $\mathcal T = \mathcal{U} / \mathcal{G}$ is the twistoid $\mathcal{T}^*= \mathcal{U} / \mathcal{G}^\tau$ (where $\mathcal{G}^\tau$ denotes the conjugated of $\mathcal G$ by $\tau$).
Equivalently, one can obtain $\mathcal{T}^*$ as the twistoid $\mathcal{U}^*/\mathcal{G}$, where $\mathcal{U}^*$ denotes the geometrically dual tessellation of $\mathcal{U}$ in the usual sense. Hence, given a cubic twistoid $\mathcal T$, its dual $\mathcal{T}^*$ is the cubic tessellation of the platycosm whose vertices, midpoints of edges and squares are the midpoints of the cubes, squares and edges, respectively, of the twistoid $\mathcal T$. Thus, the midpoints of the cubes of $\mathcal{T}^*$ correspond to the vertices of $\mathcal T$.

In \cite[Section 2]{TWISTOIDS1} we described all twists occurring as symmetries of the cubic tessellation $\mathcal{U}$.
The order of the rotational component of any such twists is either $2$, $3$ or $4$. Figure~\ref{figtwists1} and Table~\ref{twists} illustrate the eleven kinds of twists that are symmetries of $\mathcal{U}$.
In Table~\ref{twists}, for each type,
an $\times$ in the columns labeled `V',`E',`S',`C' indicates whether  the axis of a twist intersects the centroid of a vertex, edge, square, or cube, respectively.
Additionally, the column labeled `Direction' gives one possible vector for the direction of the twist, with all other possible vectors being an image of this vector under the point group \textbf{S}. The column labeled `Norm' indicates the numbers that occur as norms of the translational component of the twist.

\begin{figure}
$$\includegraphics[scale=.25]{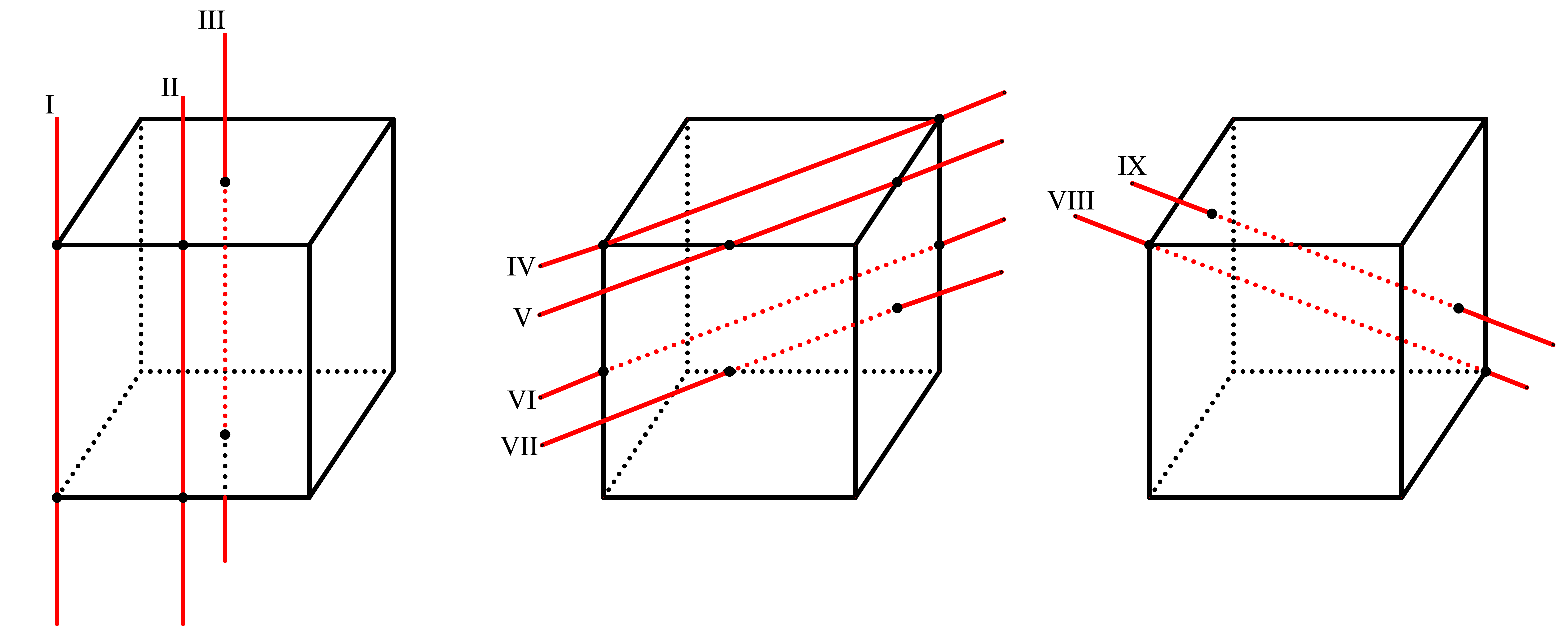}$$
\caption{Twist Symmetries of the cubic tessellation \label{figtwists1}}
\end{figure}

\begin{table}[H]
\begin{tabular}{|c|c|c|c|c|c|c|c|c|} \hline
Type & Period  & V & E & S & C &Direction & Norm \\ \hline
\Rnum{1} & 2   & $\times$ & $\times$ & & & $e_1$&$\mathbb{Z}$ \\ \hline
\Rnum{2} & 2   &  & $\times$&$\times$ & &$e_1$ &$\mathbb{Z}$ \\ \hline
\Rnum{3} & 2   &  &  &$\times$ &$\times$ & $e_1$&$\mathbb{Z}$ \\ \hline
\Rnum{4} & 2   & $\times$ &  &$\times$ & & $e_1+e_2$&$\sqrt{2} \mathbb{Z} $ \\ \hline
\Rnum{5} & 2   & & $\times$ & & & $e_1+e_2$& $\frac{\sqrt{2}}{2} \mathbb{Z} \setminus \sqrt{2} \mathbb{Z} $ \\ \hline
\Rnum{6} & 2   &  & $\times$ & &$\times$ & $e_1+e_2$&$ \sqrt{2} \mathbb{Z}$ \\ \hline
\Rnum{7} & 2   &  &  &$\times$ & & $e_1+e_2$ &$ \frac{\sqrt{2}}{2} \mathbb{Z} \setminus \sqrt{2} \mathbb{Z}$ \\ \hline
\Rnum{8} & 3   & $\times$ & & &  $\times$& $e_1+e_2+e_3$ & $ \sqrt{3} \mathbb{Z}$ \\ \hline
\Rnum{9} & 3  &  & & & & $e_1+e_2+e_3$ & $ \frac{\sqrt{3}}{3} \mathbb{Z} \setminus  \sqrt{3} \mathbb{Z} $\\ \hline
\Rnum{10} & 4  & $\times$ & $\times$ & & &  $e_1$&$\mathbb{Z}$ \\ \hline
\Rnum{11} & 4   &  &  &$\times$ &$\times$ &  $e_1$&$\mathbb{Z}$ \\ \hline
\end{tabular}
\caption{The conjugacy classes of twists in the group [4,3,4] \label{twists}}
\end{table}

A {\em Petrie polygon} of $\mathcal{U}$ is an infinite path where
 three, but not four, consecutive edges belong to a cube of $\mathcal{U}$. They are helices over triangles, living around axes of type $IX$ (see Figure~\ref{figtwists1}). The images of the Petrie polygon with vertices
\[\{(k,k,k), (k,k+1,k), (k+1,k+1,k) \mid k\in \mathbb{Z}\}\]
under the rotations preserving $\mathcal{U}$ are called {\em right Petrie paths}. The remaining Petrie polygons are called {\em left Petrie paths}, for example the one with vertex set
\[\{(k,k,k),(k+1,k,k),(k+1,k+1,k) \mid k\in \mathbb{Z}\}.\]

\section{Symmetries of the helicosms}
\label{s:orientable}
In this section we give definitions of symmetries and automorphisms of the platycosms.  As we are interested in manifolds endowed with the metric inherited from the cubic tessellation, our definitions differ slightly from some found in other sources which emphasize a topological point of view.

Let $\mathcal{M}$ be the helicosm defined by the group $\mathcal{G}$, and let $\mathcal{T}$ be a twistoid on $\mathcal{M}$.  The largest group of symmetries of $\mathcal{G}$ in $\mathbb{E}^3$ is its {\em affine normalizer}, that is the group $Aff(\mathcal{G})$ of affine transformations of $\mathbb{E}^3$ that fix $\mathcal{G}$ by conjugation.
However, some of the elements of the affine normalizer of $\mathcal G$ 
do not preserve the structure of $\mathcal M$.
Therefore, the largest group of symmetries of $\mathcal G$ that we consider in this paper is the subgroup of $Aff(\mathcal{G})$ defined by (rigid) isometries of $\mathbb{E}^3$ which preserve the cubic tessellation.
Hence, as we did in~\cite{TOROIDS} and~\cite{TWISTOIDS1}, we define the {\em symmetry group} of the twistoid $Sym(\mathcal{T})$ precisely as the subgroup of $Aff(\mathcal{G})$ consisting of (rigid) isometries of $\mathbb{E}^3$ which preserve $\mathcal{G}$ by conjugation.

Following the ideas in~\cite{CR1}, we break $Aff(\mathcal{G})$ - and thus $Sym(\mathcal{T})$ - into `parts'.
The first part (in both groups) will be called the {\em component of identity}.
In $Aff(\mathcal{G})$ these are the affinities of $\mathbb{E}^3$ that fix the group pointwise;
while the component of identity of $Sym(\mathcal{T})$ consists of the isometries of $\mathbb{E}^3$ that fix the group pointwise and preserve the cubic tessellation.  We note that for the helicosms, the component of identity (of both $Aff(\mathcal{G})$ and $Sym(\mathcal{T})$) consists only of translations.

It is known (see for example~\cite{Charlap}) that factoring $Aff(\mathcal{G})$ by its component of identity yields the group $Aut(\mathcal{G})$ of abstract automorphisms of $\mathcal{G}$.
Since $\mathcal{G}$ has a trivial center, we know that $\mathcal G$ is isomorphic to the group $Inn({\mathcal G})$ of inner automorphisms, and therefore, in order to understand the symmetries of a twistoid on the corresponding helicosm, we need to analyze the isometries of $\mathbb{E}^3$ which act like outer automorphisms of $\mathcal{G}$ and preserve the cubic lattice.    We call the collection of such isometries the {\em outer part} of $Sym(\mathcal{T})$.
This collection clearly does not form a group; however, when taking the quotient of $Sym(\mathcal{T})$ by the component of identity and then by $\mathcal{G}$ we obtain a quotient of $Aut(\mathcal{G})$.
We will call this by the group of {\em outer symmetries} of $\mathcal{T}$, and denoted by $Out({\mathcal T})$.

In~\cite{CR1}, the outer automorphism group - and thus our group of outer symmetries of a twistoid - is broken down further.   We use this idea to break the outer part of $Aff(\mathcal{G})$ and $Sym(\mathcal{T})$ further as well.   The {\em rigidly isometric part} comes from symmetries which are isometries for all parameters defining the fundamental region, whereas the {\em deformable part} comes from affinities that might be isometries for specific parameters.
As above, these collections are not groups.  However, the rigidly isometric part is a normal subgroup of the group of outer symmetries of $\mathcal{T}$.
When taking quotient by this normal subgroup we get a group which we will call the group of {\em deformable symmetries} of $\mathcal{T}$.
The structure of the outer automorphism groups of $\di$, $\tr$, $\te$, and $\he$ are considered in both~\cite{CR1} and~\cite{Hillman}.  We summarize their results below.

\begin{itemize}
\item $Out(\di) \cong (\mathbb{Z}_2^2 \rtimes PGL(2,\mathbb{Z})) \times \mathbb{Z}_2$
\item $Out(\tr) \cong S_3 \times \mathbb{Z}_2$
\item $Out(\te) \cong \mathbb{Z}_2^2$
\item $Out(\he) \cong \mathbb{Z}_2$
\end{itemize}


As in~\cite{TOROIDS} and \cite{TWISTOIDS1},
we define the automorphism group $Aut(\mathcal{T})$ for a twistoid $\mathcal{T}$ as $Sym(\mathcal{T})/\mathcal{G}$.
Furthermore, we denote the group homomorphism that sends $Sym(\mathcal{T})$ to $Aut(\mathcal{T})$ by $\phi$, and say that a symmetry $\alpha$ {\em induces} an automorphism $\overline{\alpha}$ when $\phi(\alpha) = \overline{\alpha}$.   Using the language above, we will also use the notions of the component of identity of $Aut(\mathcal{T})$, as well as the group of rigidly isometric automorphisms, and the group of deformable automorphisms.  Based on the discussion above, in order to classify twistoids on the helicosms by symmetry type we will show how both the component of identity and the outer part of $Sym(\mathcal{T})$ can be determined by the parameters defining a twistoid $\mathcal{T}$.

We start by ruling out the possibility of cubic twistoids on the hexacosm.

\begin{proposition}
There are no cubic twistoids on the hexacosm.
\end{proposition}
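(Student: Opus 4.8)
The plan is to establish the stronger assertion that $\he$ is not isomorphic to any subgroup of $\Gamma(\mathcal U)=[4,3,4]$; this suffices, since by definition a cubic twistoid on the hexacosm is $\mathcal U/\mathcal G$ for some fixed-point free discrete $\mathcal G\le[4,3,4]$ with $\mathbb E^3/\mathcal G$ the hexacosm, which forces $\mathcal G\cong\he$. The underlying point is the classical incompatibility of a $6$-fold rotation with the cubic lattice.

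First I would pass to holonomy. Since the hexacosm is orientable, such a $\mathcal G$ contains no orientation-reversing isometry, so $\mathcal G$ lies in the rotation subgroup of $[4,3,4]$. The restriction to $\mathcal G$ of the homomorphism $[4,3,4]=\mathbf T\rtimes\mathbf S\to\mathbf S$ sending an isometry to its linear part has kernel the translation subgroup of $\mathcal G$, so (as recalled above, see \cite{Charlap}) its image is the holonomy group of $\mathbb E^3/\mathcal G$. On the one hand, the holonomy group of the hexacosm is cyclic of order $6$: it is generated by the rotational component of the $6$-turn twist identifying the top and bottom of the fundamental prism of Figure~\ref{fig:FundReg}(D), the rotational components of the remaining two generating twists being powers of it. On the other hand, since $\mathcal G$ consists of orientation-preserving isometries, this image lies in the subgroup of $\mathbf S$ consisting of rotations, that is, the rotation group of the cube; its elements have orders $1$, $2$, $3$ and $4$ only, which is precisely the statement recalled in Section~\ref{s:twistoids} that every twist symmetry of $\mathcal U$ has period $2$, $3$ or $4$ (Table~\ref{twists}). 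Hence $\mathbb Z_6$ does not embed into it, a contradiction, so no such $\mathcal G$ exists.

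I do not expect a genuine obstacle here. The only step that warrants a little care is the bookkeeping in the previous paragraph — identifying the holonomy group of $\mathbb E^3/\mathcal G$ with the image of the linear-part map on $\mathcal G$, so that the argument does not rely on $\mathcal G$ being presented by metric screw motions, together with the computation that this group is $\mathbb Z_6$ for the hexacosm. Once these are in place the proposition follows at once from the list of twist symmetries of $\mathcal U$ in Table~\ref{twists}.
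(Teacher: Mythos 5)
Your proof is correct and rests on the same obstruction as the paper's: the symmetry group $[4,3,4]$ admits no order-$6$ rotational (twist) symmetries, as recorded in Table~\ref{twists}, while any group with quotient the hexacosm must contain elements with linear part of order $6$. Your passage through the holonomy group is just a more explicit justification of the step the paper leaves implicit (that any realization of $\he$ inside $[4,3,4]$ would have to contain $6$-fold twists), so the argument is essentially the paper's.
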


\begin{proof}
If there were a cubic twistoid on that manifold, then $\he$ could be realized as a subgroup of the group $[4,3,4]$ of isometries of $\mathbb{E}^3$ that preserve a cubic tessellation.  However, as noted in Table~\ref{twists}, the group $[4,3,4]$ does not contain any 6-fold twists, whereas $\he$ does.
\end{proof}

The following sections give the classification, by symmetry type, of the cubic twistoids on the dicosm, tricosm and tetracosm.

\section{Dicosm}\label{s:di}

In this section we will show that the twistoids on dicosm can be classified into twenty-two distinct families (we brake the analysis in two cases resulting with eighteen families in one and four in the other).

It follows from the notation of~\cite{ConwaySOT} that the group $\mathcal{DI}$ is generated by four twists whose axes are parallel and project orthogonally to the vertices of a parallelogram.
Furthermore, the translation component of each twist coincides. It can be easily seen that one of the twists can be obtained as a composition the other three.
Hence, the group $\mathcal{DI}$ is generated by three twists whose axes are parallel - but not coplanar - and whose translation component is equal.

The analysis naturally splits in two cases. One of them is when the axes of the twists are parallel to a coordinate axis (that is, they are of type $I$, $II$ or $III$); without loss of generality, we take the axes to be in the direction of $(0,0,1)$.  In the other case the axes are parallel to a diagonal of a square of the cubic tessellation (that is, they are of types $IV$, $V$, $VI$ or $VII$); without loss of generality, we take the axis to be in the direction of $(1,-1,0)$.  In both cases, the component of identity of any twistoid on the dicosm consists of all translations preserving the cubic tessellation $\mathcal{U}$, in the direction of the axes of the generating twists.  The following lemma will be useful in finding a set of simple parameters to classify twistoids on the dicosm.

\begin{lemma}\label{q2}
Let $\Sigma$ be a group generated by two linearly independent translations in $\mathbb{E}^2$.
\begin{enumerate}
\item  The group $\Sigma$ is generated by translations $(a,b)$ and $(c,0)$ for some $a$, $b$, and $c$.
\item The parameters $a$, $b$, and $c$ can be chosen so that $0 \leq a < c$.
\end{enumerate}
\end{lemma}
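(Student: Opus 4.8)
The plan is to read Lemma~\ref{q2} as the computation of a (lower-triangular) Hermite normal form for the lattice $\Sigma$. Throughout I would use that the generators of $\Sigma$ may be taken to have integer coordinates — this is automatic in the setting where the lemma is applied, since there $\Sigma$ is a subgroup of the lattice of translations preserving the cubic tessellation — so write $\Sigma=\langle(p,q),(r,s)\rangle$ with $p,q,r,s\in\mathbb{Z}$ and $(p,q),(r,s)$ linearly independent, hence a $\mathbb{Z}$-basis of $\Sigma\cong\mathbb{Z}^2$.

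For part~(1) I would project. Let $\pi\colon\mathbb{E}^2\to\mathbb{R}$ be projection onto the second coordinate. Since $(p,q)$ and $(r,s)$ are linearly independent, $q$ and $s$ are not both zero, so $\pi(\Sigma)=\gcd(q,s)\,\mathbb{Z}=:d\,\mathbb{Z}$ with $d\geq1$; in particular $\pi(\Sigma)$ is infinite cyclic. Pick $(a,b)\in\Sigma$ with $\pi(a,b)=b=d$. The kernel $K:=\Sigma\cap(\mathbb{R}\times\{0\})$ is a subgroup of $\mathbb{Z}\times\{0\}$; moreover it has rank $1$, because $\Sigma$ has rank $2$ and $\Sigma/K\cong d\,\mathbb{Z}$ has rank $1$, so $K=\langle(c,0)\rangle$ for some integer $c\neq0$. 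Now for any $(x,y)\in\Sigma$ we have $y=kd$ for some $k\in\mathbb{Z}$, so $(x,y)-k(a,b)\in K$, whence $(x,y)\in\langle(a,b),(c,0)\rangle$. Therefore $\Sigma=\langle(a,b),(c,0)\rangle$, which is~(1).

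For part~(2), after replacing $(c,0)$ by $(-c,0)$ if necessary I may assume $c>0$. Replacing $(a,b)$ by $(a,b)-\lfloor a/c\rfloor\,(c,0)=\bigl(a-\lfloor a/c\rfloor c,\ b\bigr)$ is an invertible integral modification of the generating pair $\{(a,b),(c,0)\}$, so the new pair still generates $\Sigma$, and now its first coordinate lies in $[0,c)$; that is, $0\leq a<c$.

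I do not expect a real obstacle here: the argument is essentially column reduction of a $2\times 2$ integer matrix. The one point that genuinely matters is the discreteness of $\pi(\Sigma)$ used in part~(1); this fails for a general rank-$2$ lattice in $\mathbb{E}^2$ (for instance $\langle(1,1),(0,\sqrt2)\rangle$ contains no nonzero horizontal vector), so the proof relies on $\Sigma$ lying in a lattice with commensurable coordinates — exactly the situation provided by the cubic tessellation.
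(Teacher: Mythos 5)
Your proof is correct, but it takes a different route from the paper's. The paper disposes of part (1) by citing Lemma 4 of~\cite{TOROIDS} (which deals with lattices of translations arising from the cubic tessellation and already provides a ``triangular'' generating set), and it proves part (2) exactly as you do, by adjusting the first generator by integer multiples of $(c,0)$. You instead give a self-contained Hermite-normal-form argument: project to the second coordinate, identify the kernel of the projection as a rank-one subgroup generated by some $(c,0)$, and reduce; this buys independence from the external reference, and it also makes explicit a hypothesis the paper leaves implicit. As literally stated, for arbitrary real translations in a fixed coordinate frame, the lemma fails --- your example $\langle(1,1),(0,\sqrt{2})\rangle$ contains no nonzero horizontal vector, so it admits no generating pair of the form $(a,b)$, $(c,0)$ --- and your observation that one needs $\Sigma$ to lie in a lattice commensurable with the coordinate axes (as it does in the intended application, where the relevant translation vectors have coordinates in $\frac{1}{2}\mathbb{Z}$ or $\frac{1}{4}\mathbb{Z}$, hence become integral after rescaling) is precisely the content that the citation to~\cite{TOROIDS} supplies in the paper. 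In short, the paper's proof buys brevity and consistency with the earlier toroid classification; yours buys transparency and pins down the hypothesis under which the statement is actually true. The only cosmetic point is that where you assume integer coordinates you should say ``after rescaling by $2$ or $4$,'' since the translations arising in Section~\ref{s:di} are half- or quarter-integral; this changes nothing in the argument.
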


\begin{proof}
The first item is a direct consequence of Lemma 4 of~\cite{TOROIDS}.   The second item follows from the fact that the translation by the vector $n(c,0)$ is in $\Sigma$ for any integer $n$.
\end{proof}

\medskip
{\bf Case 1:} The axes of the generators have direction vector $(0,0,1)$.
\medskip

Let $c \in \mathbb{Z}$ be the norm of the translational component of the generating twists. If the axis of one the generators contains a vertex of $\mathcal{U}$, then we denote that generator by $\sigma_1$. Furthermore, as we are considering twistoids up to duality, if none of the axes contains a vertex of $\mathcal{U}$, but one of them contains a centroid of a cube, we may consider the corresponding twist as $\sigma_1$ and work with the dual tessellation.

Let $\Pi$ be the $xy$-plane. Much of our analysis will be done by considering how the axes of the generating twists in the cubic tessellation project into $\Pi$.
Assume that the axes of $\sigma_1$, $\sigma_2$ and $\sigma_3$ intersect $\Pi$ on the points $(p_1,q_1,0)$, $(p_2,q_2,0)$, and $(p_3,q_3,0)$, respectively. Up to duality and because of the freedom on the choice of the $x$ and $y$ axes, we may assume that $q_1=q_2=0$,  $p_1 \in \{0,\frac{1}{2}\}$, $p_2>p_1$ and $p_3 \ge 0$, $q_3>0$.  Note that $p_2, p_3, q_3 \in \frac{1}{2}\mathbb{Z}$. As a consequence of our previous assumptions, if $p_1 = \frac{1}{2}$ then $p_2 \in \frac{1}{2}\mathbb{Z}\setminus\mathbb{Z}$. Furthermore, if $p_1 = \frac{1}{2}$ and $q_3 \in \mathbb{Z}$ then $p_3 \in \frac{1}{2}\mathbb{Z}\setminus\mathbb{Z}$.

A twistoid on the dicosm with the parameters described above will be denoted by $[\mathcal{DI} \mid c,p_1,p_2,p_3,q_3]$.  Note that such a twistoid contains $4cq_3(p_2-p_1)$ cubes, and therefore $192cq_3(p_2-p_1)$ flags.


The component of identity of $Sym([\mathcal{DI} \mid c,p_1,p_2,p_3,q_3])$ consists of all translations by a vector $(0,0,k)$ with $k\in \mathbb{Z}$. Therefore, there are precisely $2c$ different automorphisms in the component of identify of $Aut([\mathcal{DI} \mid c,p_1,p_2,p_3,q_3])$.  The rigidly isometric subgroup of $Aff(\mathcal{DI})$ is generated by translations $\alpha$ and $\beta$, mapping the axis of $\sigma_1$ to that of $\sigma_2$ and $\sigma_3$, respectively, together with the reflection with respect to a plane perpendicular to these axes.  Since the reflection $\rho$ with respect to a plane $z=k$ for $k \in \frac{1}{2}\mathbb{Z}$ is a symmetry of $[\mathcal{DI} \mid c,p_1,p_2,p_3,q_3]$ for all possible parameters, the rigidly isometric part of $Sym([\mathcal{DI} \mid c,p_1,p_2,p_3,q_3])$ can be determined by calculating for which parameters $\alpha$, $\beta$, and $\alpha\beta$ preserve the cubic lattice.

We choose $\alpha$ to be the translation by the vector $(p_2-p_1,0,0)$ which is a symmetry of the twistoid whenever $p_2-p_1\in \mathbb{Z}$. Likewise, we choose $\beta$ to be the translation by the vector $(p_3-p_1,q_3,0)$, which is a symmetry whenever $p_3-p_1,q_3\in \mathbb{Z}$. Similarly, the translation $\alpha \beta$ is a symmetry whenever $p_3-p_2, q_3 \in \mathbb{Z}$.
Therefore the group of rigidly isometric automorphisms of $[\mathcal{DI} \mid c,p_1,p_2,p_3,q_3]$ is either $\langle \rho' \rangle$, $\langle  \rho', \alpha' \rangle$, $\langle  \rho',  \beta' \rangle$, $\langle  \rho', \alpha' \beta' \rangle$, or $\langle  \rho', \alpha', \beta' \rangle$, where $\rho'$, $\alpha'$, and $\beta'$ are the elements in the group of rigidly isometric automorphisms that are induced by $\rho$, $\alpha$, and $\beta$, respectively.

In order to complete the description of $Sym([\mathcal{DI} \mid c,p_1,p_2,p_3,q_3])$, it remains to determine all symmetries in the deformable part of the $Sym([\mathcal{DI} \mid c,p_1,p_2,p_3,q_3])$.

Let $\phi$ be an isometry in the deformable part of $Sym([\mathcal{DI} \mid c,p_1,p_2,p_3,q_3])$.  We may assume that $\phi$ preserves the projection of $\sigma_1$, as the group of deformable symmetries is a quotient by the rigidly isometric ones. Furthermore, since all elements in the deformable part preserve the lattice formed by the projection onto $\Pi$ of the axes of twists in $\mathcal{DI}$, in this plane $\phi$ acts like an element in the dihedral group of order 8, preserving this lattice while fixing the projection of $\sigma_1$. In this case, $\phi$ is a half-turn with respect to an axis parallel to the corresponding axis in the projection, fixing $\mathcal{U}$, or $\phi$ is a twist along the axis of $\sigma_1$.

Since the projection of $\mathcal{U}$ into the plane $\Pi$ yields a square grid, any symmetry of the twistoid in the deformable part must be either a half-turn with respect to an axis with direction vector $(1,0,0)$, $(0,1,0)$, $(1,1,0)$ or $(1,-1,0)$ preserving the lattice, or a power of a 4-fold-twist $\eta$ along the axis of $\sigma_1$. Note that the second power of $\eta$ is equivalent under the component of identity to $\sigma_1$. Consequently, the half-turns along the axes determined by $(1,0,0)$ and $(0,1,0)$ when they occur, are equivalent under the component of identity and $\mathcal{DI}$ itself.  Similarly, the half-turns along the axes $(1,1,0)$ and $(1,-1,0)$ are equivalent.  Therefore we only need to determine whether $\eta$ and the half-turns $\gamma_1$ and $\gamma_2$ with respect to the axes generated by $(1,0,0)$ and $(1,1,0)$, respectively, occur as symmetries of the twistoid.

An easy calculation shows that $\gamma_1$ is a symmetry of $[\mathcal{DI} \mid c,p_1,p_2,p_3,q_3]$ if and only if $p_1=p_3$ or $p_3=\frac{p_2+p_1}{2}$. Furthermore, it follows from \cite[Proposition 5]{TOROIDS} that $\gamma_2$ is a symmetry of $[\mathcal{DI} \mid c,p_1,p_2,p_3,q_3]$ if and only if
\begin{equation}\label{eqclass1}
\frac{p_3-p_1}{q_3} \quad, \quad \frac{p_2-p_1}{q_3} \quad, \quad \frac{q_3^2-(p_3-p_1)^2}{q_3(p_2-p_1)} \in \mathbb{Z},
\end{equation}
and that $\eta$ is a symmetry if and only if
\begin{equation}\label{eqclass2}
\frac{p_3-p_1}{q_3} \quad, \quad \frac{p_2-p_1}{q_3} \quad, \quad \frac{q_3^2+(p_3-p_1)^2}{q_3(p_2-p_1)} \in \mathbb{Z}.
\end{equation}

To complete the classification, for any set of parameters, we first determine the subgroup of rigidly isometric symmetries. Then, for each possible rigidly isometric subgroup, we determine the subgroup of deformable symmetries.  When $c \in \mathbb{Z}$, the previous analysis classified the twistoids $[\mathcal{DI} \mid c,p_1,p_2,p_3,q_3]$ according to their rigidly isometric and deformable subgroups of symmetries shown as the families listed in Table~\ref{t:ugly}.  We note here that the deformable subgroup is in fact determined by the symmetry type of the toroid given by the projections of the axes of the $\sigma_i$'s.

\begin{table}
\begin{tabular}{|c|c|c|c|c|c|} \hline
& 1 & 2 & $2_{0,2}$ & $2_1$ & 4 \\ \hline
$\langle \rho' \rangle$ & $p_1=0, p_2=\frac{5}{2}$ & $p_1=0, p_2=\frac{5}{2}$ & $p_1=0, p_2=\frac{5}{2}$ & $p_1=0, p_2=\frac{3}{2}$& $p_1=0, p_2=\frac{21}{2}$ \\
& $p_3=0, q_3 = \frac{5}{2}$ &  $p_3=1, q_3 = \frac{1}{2}$  &  $p_3=0, q_3 = \frac{3}{2}$ & $p_3=1, q_3 = \frac{1}{2}$ &  $p_3=2, q_3 = \frac{1}{2}$ \\ \hline
$\langle \rho', \alpha' \rangle$ & $p_1=0, p_2=3$ & $p_1=0, p_2=13$ & $p_1=0, p_2=5$ & $p_1=0, p_2=4$& $p_1=0, p_2=10$ \\
& $p_3=\frac{3}{2}, q_3 = \frac{3}{2}$ &  $p_3=\frac{5}{2}, q_3 = \frac{1}{2}$  &  $p_3=\frac{5}{2}, q_3 = 1$ & $p_3=\frac{3}{2}, q_3 = \frac{1}{2}$ &  $p_3=1, q_3 = \frac{1}{2}$ \\ \hline
$\langle \rho',  \beta' \rangle$ & None & None & $p_1=0, p_2=\frac{3}{2}$ & None & $p_1=0, p_2=\frac{11}{2}$ \\
&  &    &  $p_3=0, q_3 = 2$ &  &  $p_3=1, q_3 = 2$ \\ \hline
$\langle \rho', \alpha' \beta' \rangle$ & None & None  & None & None & $p_1=0, p_2=\frac{5}{2}$ \\
&  &    &  & &  $p_3=\frac{1}{2}, q_3 = 1$ \\ \hline
$\langle \rho', \alpha', \beta' \rangle$ & $p_1=0, p_2=2$ & $p_1=0, p_2=5$ & $p_1=0, p_2=6$ & $p_1=0, p_2=3$& $p_1=0, p_2=20$ \\
& $p_3=0, q_3 =2$ &  $p_3=2, q_3 =1$  &  $p_3=3, q_3 =1$ & $p_3=2, q_3 =1$ &  $p_3=2, q_3 = 1$ \\ \hline
\end{tabular}
\caption{The 18 families of twistoids on the dicosm, where the translational component of $\sigma_1$ is $(0,0,c)$.  The columns of the table give the deformable parts and the rows give the rigidly isometric parts.\label{t:ugly}}
\end{table}

From \cite{TOROIDS} we observe that there are 5 possible symmetry types of a maps on the torus of type $\{4,4\}$. Each of the columns of Table~\ref{t:ugly} correspond to one of these types, and each entry of the table gives an example of a twistoid with the symmetries given in each row and the projection of the axes given by the columns.

In what follows we explain why there are no twistoids in the families labelled ``None'' in Table~ \ref{t:ugly}. We start with three remarks whose proofs are straight forward and thus omitted.

\begin{remark}\label{triallemma2}
If the axes of the generating twists project to $\Pi$ into exactly two different types of points (vertices, centers of squares, midpoints of horizontal edges, or midpoints of vertical edges) and $\gamma_2$ is a symmetry of a twistoid, then the projections of the axes of the generators are either vertices and centers of squares, or they are midpionts of vertical and horizontal edges.\end{remark}

\begin{remark}\label{triallemma1}
If the axis of any of the generating twists has points with $y$ coordinate in $\frac{1}{2}\mathbb{Z} \setminus \mathbb{Z}$, then the axes of $\sigma_1$ and $\sigma_2$ were chosen to contain points with integer $y$ coordinate, and therefore the axes of $\sigma_3$ and $\sigma_4$ have $y$ coordinate in $\frac{1}{2}\mathbb{Z} \setminus \mathbb{Z}$.
\end{remark}

The two above remarks imply that there are no possible parameters such that the twistoid $[\mathcal{DI} \mid c,p_1,p_2,p_3,q_3]$ has rigidly isometric subgroup giving the columns in Table~\ref{t:ugly} labeled $1$ or $2_1$ while having a deformable subgroup given by $\langle \beta' \rangle$ or $\langle \alpha' \beta' \rangle$.

\begin{remark}\label{triallemma3}
If the axes of the generating twists project to $\Pi$ into exactly two types of different points (vertices, centers of squares, midpoints of horizontal edges, or midpoints of vertical edges), and $\eta$ is a symmetry of a twistoid, then $p_1 = 0$ and the projections of the axes of the generators are all vertices and centers of squares, since midpionts of vertical and horizontal edges are interchanged by $\eta$.
\end{remark}

Remarks \ref{triallemma2} and \ref{triallemma3} imply that  there are no possible parameters such that the twistoid $[\mathcal{DI} \mid c,p_1,p_2,p_3,q_3]$ has rigidly isometric subgroup giving the column in Table~\ref{t:ugly} labeled $2$ while having a deformable subgroup given by $\langle \beta' \rangle$ or $\langle \alpha' \beta' \rangle$.

Considering the conditions on the parameters, an easy algebraic argument shows that if the deformable part contains $\gamma_1$ and the rigidly isometric part contains $\alpha \beta$, then the rigidly isometric part also contains $\alpha$. To see this assume first that $p_3=p_1$ and $p_3-p_2 , q_3 \in \mathbb{Z}.$  Then clearly $p_2-p_1$ is an integer, implying that $\alpha$ is a symmetry of the twistoid.  On the other hand, if $p_3=\frac{p_1+p_2}{2}$ and $p_3-p_2 , q_3 \in \mathbb{Z}$, then $\frac{p_1-p_2}{2}\in \mathbb{Z}$ implying that $\alpha$ is again a symmetry of the twistoid.  This shows that there are no twistoids having rigidly isometric subgroup determined by the column labeled $2_{0,2}$ while having a deformable part given by $\langle \alpha' \beta' \rangle$.

We have now shown that there are eighteen non-empty families of twistoids $[\mathcal{DI} \mid c,p_1,p_2,p_3,q_3]$ with $c\in \mathbb{Z}$, and given a possible set of parameters for an example of each family. For each of these families we derive the number of flag-orbits of a twistoid $[\mathcal{DI} \mid c,p_1,p_2,p_3,q_3]$.

As noted above, the number of flags of the twistoid $[\mathcal{DI} \mid c,p_1,p_2,p_3,q_3]$ is $192cq_3(p_2-p_1)$. Since the component of identify of $Aut([\mathcal{DI} \mid c,p_1,p_2,p_3,q_3])$ has $2c$ different elements, and the reflection $\rho$ is always a symmetry, it follows that each such twistoid has at most $48q_3(p_2-p_1)$ flag-orbits. In fact, this number is achieved when the projection of the axes of the twists of $\mathcal{DI}$ form a lattice inducing a torus in class $4$ (where the deformable subgroup is trivial) and the rigidly isometric subgroup consists only of $\langle \rho' \rangle$.

If only one of $\alpha$, $\beta$ or $\alpha \beta$ is a symmetry of $[\mathcal{DI} \mid c,p_1,p_2,p_3,q_3]$ then the number of flag-orbits is reduced by a factor of $2$. Furthermore, if all three of them are symmetries, then the number of flag-orbits is reduced by a factor of $4$. The precise number of flag-orbits for a twistoid in each family is then determined by the deformable part. If the deformable part is a group of order $n$ then the number of flag-orbits is reduced by a factor of $n$.  For example, $[\mathcal{DI} \mid 28,0,2,0,2]$ has $3\cdot 2 \cdot 2 = 12$ flag-orbits.

\medskip
{\bf Case 2:} The axes of the generators have direction vector $(1,-1,0)$.
\medskip

Let $\Pi$ be the plane $x=y$.  We denote by $(p_1, p_1, q_1)$, $(p_2, p_2, q_2)$ and $(p_3, p_3, q_3)$ the projections of $\sigma_1$, $\sigma_2$, and $\sigma_3$ to $\Pi$, respectively. Up to duality, we may choose $p_1 \in \{0, \frac{1}{4}\}$ and $q_1=0$. Furthermore, by Lemma \ref{q2} we may assume also that $q_2=0$. Note that $p_2, p_3 \in \frac{1}
{4}\mathbb{Z}$ and $q_3 \in \frac{1}{2}\mathbb{Z}$.

The norm of the translational component $c$ of $\sigma_1$ is an integer multiple of $\sqrt{2}$ if $p_1 = 0$,
or an odd integer multiple of $\frac{\sqrt{2}}{2}$
if $p_1 = \frac{1}{4}$. This implies that either $p_1, p_2, p_3$ are all in $\frac{1}{2}\mathbb{Z}$ or none of them are.

A twistoid on the dicosm with the parameters described above will be denoted by $[\mathcal{DI} \mid c,p_1,p_2,p_3,q_3]$. Note that these twistoids are differentiated from those in the previous case by whether $c \in \mathbb{Z}$ or not.  The twistoid $[\mathcal{DI} \mid c,p_1,p_2,p_3,q_3]$ has $4c\sqrt{2}q_3(p_2-p_1)$ cubes, and therefore $192c\sqrt{2}q_3(p_2-p_1)$ flags.


The component of identity of $Sym([\mathcal{DI} \mid c,p_1,p_2,p_3,q_3])$ consists of all translations by a vector $(k,-k,0)$ with $k\in \mathbb{Z}$. That is, there are precisely $\sqrt{2}c$ different elements of the component of identity.

As before, the group of rigidly isometric part of $Aff(\mathcal{DI})$ is generated by translations $\alpha$, $\beta$, and $\alpha \beta$ mapping the axis of $\sigma_1$ to that of $\sigma_2$, $\sigma_3$ and $\sigma_4$, respectively, together with the reflection with respect to a plane perpendicular to these axes.

The reflection with respect to a plane through the origin with normal vector $(1,-1,0)$ is a symmetry of $[\mathcal{DI} \mid c,p_1,p_2,p_3,q_3]$ for all possible parameters.

If $p_2-p_1 \in \mathbb{Z}$ then the translation $\alpha$ by the vector $(p_2-p_1, p_2-p_1, 0)$ also preserves $\mathcal{U}$ and normalizes $\mathcal{DI}$, and therefore is a symmetry of $[\mathcal{DI} \mid c,p_1,p_2,p_3,q_3]$ for all possible parameters.
On the other hand, if $p_2-p_1 \in \frac{\mathbb{Z}}{2} \setminus \mathbb{Z}$ then we may take $\alpha$ to be the translation by the vector $(q_2-q_1+\frac{1}{2}, q_2-q_1-\frac{1}{2},0)$; in this case $\alpha$ does not preserve $\Pi$. It follows that the translation $\beta$ is a symmetry if and only if $\alpha \beta$ is a symmetry.

Since the projection of $\mathcal{U}$ into the plane $\Pi$ yields a rectangular grid, by arguments similar to the previous case, any nontrivial symmetry of the twistoid in the deformable part must be either a half-turn about an axis with direction vector $(1,1,0)$ or $(0,0,1)$ preserving the lattice, or a 2-fold-twist along the axis of $\sigma_1$. Note that the latter is equivalent under the component of identity to $\sigma_1$. Consequently, the other two half-turns, in the case where they occur, are equivalent under the component of identity and $\mathcal{DI}$ itself.

Therefore, in order to determine the entire set of symmetries of $[\mathcal{DI} \mid c,p_1,p_2,p_3,q_3]$ it remains to determine for which parameters the translation $\beta$ by $(p_3-p_1,p_3-p_1,q_3-q_1)$ and the half-turn $\chi$ along the axis $\{k(1,1,0) \mid k \in \mathbb{R}\}$ are symmetries of the twistoid.  An easy calculation shows that $\chi$ is a symmetry of $[\mathcal{DI} \mid c,p_1,p_2,p_3,q_3]$ if and only if $p_3=p_1$ or $p_3 = \frac{p_1+p_2}{2}$. On the other hand, $\beta$ is a symmetry if and only if $q_3 \in \mathbb{Z}$.

Recall that $p_2-p_1 \in {\mathbb{Z}}$ (resp. $p_3-p_1 \in \mathbb{Z}$) if and only if the direction vector of $\alpha$ (resp. $\beta$) can be chosen to be perpendicular to $(1,-1,0)$.

The previous analysis classifies the twistoids $[\mathcal{DI} \mid c,p_1,p_2,p_3,q_3]$ in the following nonempty families when $c \in \frac{\sqrt{2}}{2}\mathbb{Z}$:
\begin{itemize}
  \item $q_3 \in \mathbb{Z}$ and ($p_3 = p_1$ or $p_3 = \frac{p_1+p_2}{2}$), admitting all possible symmetries arising from the rigid as well as the deformable part. Twistoids in this family have $12q_3(p_2-p_1)$ flag-orbits.
  \item $q_3 \notin \mathbb{Z}$ and ($p_3 = p_1$ or $p_3 = \frac{p_1+p_2}{2}$), admitting all possible symmetries arising from the deformable part, but not from the rigid part. Twistoids in this family have $24q_3(p_2-p_1)$ flag-orbits.
  \item $q_3 \in \mathbb{Z}$, $p_3\ne p_1$, and $p_3\ne \frac{p_1+p_2}{2}$, admitting all symmetries arising from the rigid part, but not from the deformable part. Twistoids in this family have $24q_3(p_2-p_1)$ flag-orbits.
  \item $q_3 \notin \mathbb{Z}$, $p_3\ne p_1$, and $p_3\ne \frac{p_1+p_2}{2}$, including all twistoids not considered in the previous families. Twistoids in this family have $48q_3(p_2-p_1)$ flag-orbits.
\end{itemize}

\section{Tricosm}\label{s:tri}

In this section we will show that the twistoids on tricosm can be classified into three distinct families.

It follows from the notation in~\cite{ConwaySOT} that the group $\mathcal{TR}$ is generated by three twists $\sigma_1$, $\sigma_2$, and $\sigma_3$, whose axes are parallel and project orthogonally to the vertices of an equilateral triangle. Furthermore, their translational component coincides. It can be easily seen that $\sigma_1$ and $\sigma_2$ determine $\sigma_3$, for example by $\sigma_3^2 = \sigma_1 \sigma_2$.
Hence, the group $\mathcal{TR}$ is generated by two twists whose axes are parallel, having equal translational component.

Since the generators are 3-fold twists, they must be parallel to the diagonal of a cube of the tessellation, that is, of types $VIII$ or $IX$. Without loss of generality we assume that the translational component of $\sigma_1$ and $\sigma_2$ is $c(1,1,1)/\sqrt{3}$. More generally, for $i\in \{1,2\}$, the axis of $\sigma_i$ can be taken to be $\{t(1,1,1) + (p_i, q_i, r_i) \mid t \in \mathbb{R}\}$.

Let $\Pi$ be the plane through $(0,0,0)$ with normal vector $(1,1,1)$. Then the projection of $\mathcal{U} $ to $\Pi$ induces the vertex lattice $\Lambda$ of a tessellation of $\Pi$ by equilateral triangles. Since $\sigma_i$ preserves $\mathcal{U}$, it induces a 3-fold rotation on $\Lambda$
with respect either to a point of $\Lambda$ or to a centroid of an equilateral triangle (marked with a gray `X' in Figure~\ref{fig:LTR}, where the projection of the three coordinate axes are marked). Without loss of generality we may assume that $(p_1,q_1,r_1) = (0,0,0)$ if the axis of $\sigma_1$ contains points of $\Lambda$, or $(p_1,q_1,r_1) = \frac{1}{3}(1,-1,0)$ otherwise.
Note that in the latter case, the axis of $\sigma_1$ is the axis of a Petrie polygon of $\mathcal{U}$.

\begin{figure}
\begin{center}
\includegraphics[width=5cm, height=5cm]{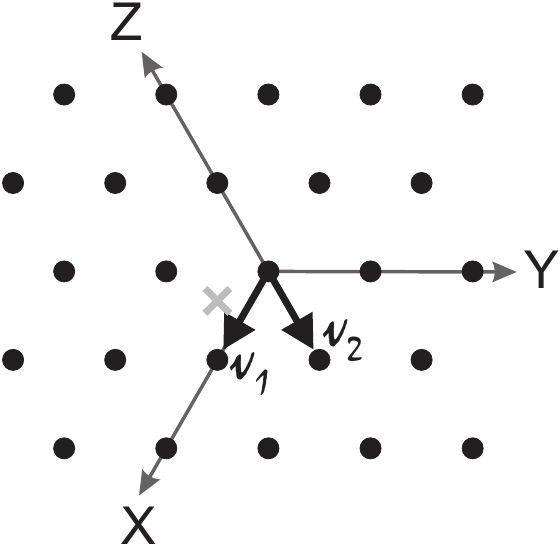}
\caption{Projection $\Lambda$ of $\mathcal{U}$ to $\Pi$ \label{fig:LTR}}
\end{center}
\end{figure}

If the axis of a $3$-fold twist preserving $\mathcal{U}$ contains a vertex of $\mathcal{U}$ then $c \in \sqrt{3}\mathbb{Z}$. On the other hand, if the axis of a $3$-fold twist is the axis of a Petrie polygon of $\mathcal{U}$ then $c \in \frac{\sqrt{3}}{3}\mathbb{Z} \setminus \sqrt{3}\mathbb{Z}$. It follows that if the axis of $\sigma_1$ contains a vertex of $\mathcal{U}$ then the axis of $\sigma_2$ also contains a vertex of $\mathcal{U}$.

Additionally, if $\frac{c}{\sqrt{3}} = 3k+1$ for some integer $k$, then the axis of $\sigma_1$ is the axis of a right Petrie polygon of $\mathcal{U}$ and induces a clockwise 3-fold rotation in $\Pi$.  On the other hand if $\frac{c}{\sqrt{3}} = 3k+2$, then the axis of $\sigma_1$ is the axis of a left Petrie polygon of $\mathcal{U}$ and induces a counterclockwise 3-fold rotation. In either case, the projections of the axes of $\sigma_i$ all are centers of triangles of the same color (in the natural 2-coloring of the tessellation of the plane by regular triangles).

As in Section~\ref{s:di}, the value of $c$ determines the location of the axis of $\sigma_1$. If $c \in \sqrt{3}\mathbb{Z}$ then $(p_1,q_1,r_1) = (0,0,0)$, otherwise $(p_1,q_1,r_1) = \frac{1}{3}(1,-1,0)$. For convenience, we locate the axis of $\sigma_2$ in terms of its projection to $\Pi$, with respect to the basis $B=\{v_1,v_2\}$ where $v_1=\frac{1}{3}(2,-1,-1)$ and $v_2 = \frac{1}{3}(1,1,-2)$ (see Figure~\ref{fig:LTR}).

In this notation, the intersection of the axis of $\sigma_2$ with $\Pi$ is described as $(a,b) = av_1 +bv_2$ with $a, b \in \mathbb{Z}$.  We may assume without loss of generality that both $a$ and $b$ are positive. Since $\sigma_3$ was chosen such that $\sigma_1 \sigma_2 = \sigma_3^2$, the intersection of the axes of $\sigma_1$, $\sigma_2$, and $\sigma_3$ with $\Pi$ forms an equilateral triangle such that a clockwise $3$-fold rotation permutes the intersections with $\Pi$ of the axes of $\sigma_1$, $\sigma_3$ and $\sigma_2$ in that order.

The parameters $c,a,b$ then determine completely the twistoid, and therefore a twistoid with these parameters is denoted by $[\mathcal{TR} \mid c,a,b]$.

Using that $|av_1+bv_2| = \sqrt{\frac{2a^2+2b^2+2ab}{3}}$ it follows that the volume (number of cubes) of the fundamental region with these parameters is $\sqrt{3}c(2a^2+2b^2+2ab)$ and therefore the twistoid $[\mathcal{TR} \mid c,a,b]$ has $48c\sqrt{3}(a^2+b^2+ab)$ flags.

The component of identity of $Sym([\mathcal{TR} \mid c,a,b])$ consists of all translations by a vector $(k,k,k)$ with $k\in \mathbb{Z}$. Therefore, there are precisely $\sqrt{3}c$ different elements in the component of identity of $Aut([\mathcal{TR} \mid c,a,b])$. Since the deformable part in this case is trivial, it only remains to determine the rigidly isometric symmetries.

Define $\alpha$ as a translation sending the axis of $\sigma_1$ to the axis of $\sigma_2$ while preserving the cubic lattice  Then $\alpha$ is a translation by the vector $a v_1 + b v_2 + k (1,1,1)$ for some $k \in \frac{1}{3} \mathbb{Z}$.  Without loss of generality we can assume that $k \in \{0,\frac{1}{3}, \frac{2}{3} \}$.
Considering the natural 2-coloring of the triangles in $\Lambda$, such an $\alpha$ always exists, and yields a non-trivial rigidly isometric part.
Note that a similar translation sending the axis of $\sigma_1$ to the axis of $\sigma_3$ is equivalent to $\alpha$ under under the quotient of $\mathcal{TR}$; in fact it is the conjugate of $\alpha$ by $\sigma_1^2$.

If the parameters allow it, we define $\chi$ as the half-turn about a line meeting the axes of $\sigma_1$ and $\sigma_2$, perpendicular to both and preserving the cubic tessellation.
The projection of the axis $\chi$ onto the plane $\Pi$ gives a line, about  $\chi$ which acts like a reflection when restricted to $\Lambda$.
 When $c \notin \sqrt{3} \mathbb{Z}$, the only possible way for the axis for
$\chi$ to project to such a line is if $a=b$.
Furthermore, in this case, it can be seen that $\chi$ is a symmetry of the twistoid $[\mathcal{TR} \mid c,a,b]$ if and only if $a = b$. If $c \in \mathbb{Z}$ then, in order to preserve $\Lambda$, the axis of $\chi$ could also project to the edges of the triangles; however, no axis of a $2$-fold rotation preserving the cubic tessellation projects to such lines. It is easy to see now that, for all $c$, $\chi$ is a symmetry of $[\mathcal{TR} \mid c,a,b]$ if and only if $a=b$.

In case there is no half-turn about lines meeting the axes of $\sigma_1$ and $\sigma_2$, perpendicular to both and preserving the tessellation, there may be a half-turn $\zeta$ about a line perpendicular to the axis of $\sigma_1$, interchanging the axes of $\sigma_2$ and $\sigma_3$. By arguments similar as above, we can easily see that $[\mathcal{TR} \mid c,a,b]$ has $\zeta$ as a symmetry if and only if $a=0$ or $b=0$.

Finally, we note that there is a $6$-fold rotation about the axis of $\sigma_1$ preserving the group; however, such an isometry does not preserve the cubic tessellation, implying that such an outer automorphism of $\mathcal{TR}$ can never be realized as a symmetry of a twistoid.

Recall that the outer automorphism group of $\mathcal{TR}$ is isomorphic to $\mathbb{Z}_2 \times S_3$. Since the symmetry group of every twistoid $[\mathcal{TR} \mid c,a,b]$ contains $\alpha$, which projects to an element $\overline{\alpha}$ of order $3$ in $Out(\mathcal{TR})$, it follows that $\langle \overline{\alpha} \rangle$ is normal in $Out(\mathcal{TR})$ and that $Out(\mathcal{TR})/\langle \overline{\alpha} \rangle$ is isomorphic to $\mathbb{Z}_2\times \mathbb{Z}_2$. Furthermore, the non-trivial elements of the latter are the classes of $\chi$, $\zeta$ and $\chi\zeta$. Note that $\chi\zeta$ is a $6$-fold twist, and no such isometries preserve $\mathcal{U}$. Consequently, at most one of $\chi$ and $\zeta$ is a symmetry of the twistoid. Therefore there are three non-empty families of twistoids in the tricosm as explained next.

\begin{itemize}
  \item $a=b$, allowing the symmetry $\chi$. Twistoids in this family have $8(a^2+b^2+ab)$ flag-orbits.
  \item $a=0$ or $b=0$, allowing the symmetry $\zeta$. Twistoids in this family have $8(a^2+b^2+ab)$ flag-orbits.
  \item $ab(a-b)\ne 0$, allowing none of $\chi$ or $\zeta$. Twistoids in this family have $16(a^2+b^2+ab)$ flag-orbits.
\end{itemize}

\section{Tetracosm}\label{s:tetra}

In this section we will show that the twistoids on tetracosm can be classified into four distinct families.

The tetracosm is the quotient of $\mathbb{E}^3$ by the group $\mathcal{TE}$.  The canonical generators of this group are two 4-fold twists (which we denote by $\sigma$ and $\sigma_1$) and one 2-fold twist (which we denote by $\sigma_2$), all of which have parallel axes.
The translational component of $\sigma$ and $\sigma_1$ is the same and we denote its norm by $c$.  The norm of the translational component of $\sigma_2$ is then $2c$.
It can be easily seen that $\sigma$ can be obtained from $\sigma_1$ and $\sigma_2$.
Hence, the group $\mathcal{TR}$ is generated by two twists whose axes are parallel; one of them is a half-turn twist with translational component twice the translational component of the other one, which is a $4$-fold twist.

Since one of the generators is a 4-fold twist, all the axes of the generators must be parallel to some coordinate axis, that is, of types $I$, $II$ or $III$. Without loss of generality we assume that the translational component of $\sigma_1$  is $(0,0,c)$. More generally, for $i\in \{1,2\}$, the axis of $\sigma_i$ is $\{t(0,0,1) + (p_i, q_i, 0) \mid t \in \mathbb{R}\}$. By choosing $\sigma$ such that $\sigma_1^{-1} \sigma_2 = \sigma$ we observe that the axis of $\sigma$ is $\{t(0,0,1) + (p_2-q_2, p_2+q_2, 0) \mid t \in \mathbb{R}\}$.

Let $\Pi$ be the $xy$-plane. Then the projection of $\mathcal{U}$ to $\Pi$ induces a square tessellation $\Lambda$. Since $\sigma_1$ preserves $\mathcal{U}$, it acts like a 4-fold rotation in $\Lambda$ with respect either to a vertex of $\Lambda$ or to a centroid of a square. Without loss of generality and up to duality, we may assume that $(p_1,q_1) = (0,0)$. Furthermore, $p_2, q_2 \in \frac{1}{2}\mathbb{Z}$. For convenience we shall denote $p_2$ and $q_2$ by $p$ and $q$ respectively.

Since $\sigma_1^{-1} \sigma_2 = \sigma$, the intersection of the axis of $\sigma$ with $\Pi$ forms a right isosceles triangle with the intersection of the axes of $\sigma_1$ and $\sigma_2$ with $\Pi$ such that the right angle is at the projection of $\sigma_2$.
The parameters $c,p,q$ then determine completely the twistoid and therefore a twistoid with these parameters is denoted by $[\mathcal{TE} \mid c,p,q]$.  It is easy to see that the volume (number of cubes) of the fundamental region with these parameters is $4c(p^2+q^2)$ and therefore the twistoid $[\mathcal{TE} \mid c,p,q]$ has $192c(p^2+q^2)$ flags.

The component of identity of $Sym([\mathcal{TE} \mid c,p,q])$ consists of all translations by a vector $(0,0,k)$ with $k\in \mathbb{Z}$. Note that there are $4c$ different elements of the component of identity of $Aut([\mathcal{TE} \mid c,p,q])$.  Since the deformable part in this case is again trivial, it only remains to determine all symmetries of the twistoid that project to the rigidly isometric part.

We define $\alpha$ as the translation by the vector $(p-q,p+q)$ sending the axis of $\sigma_1$ to the axis of $\sigma$. Then $\alpha$ is a symmetry of the twistoid $[\mathcal{TE} \mid c,p,q]$ if and only if $p-q \in \mathbb{Z}$.  Furthermore, we define $\chi$ as the half-turn about the line $\{t(p,q,0) \mid t \in \mathbb{R}\}$. The axis $\chi$ is a line, about which $\chi$ acts like a reflection when considering its action on $\Lambda$. It is easy to see that $\chi$ is a symmetry of $[\mathcal{TE} \mid c,p,q]$ if and only if $pq(p-q)=0$.

Recall that the outer automorphism group of $\mathcal{TE}$ is isomorphic to $\mathbb{Z}_2^2$. Note that $\alpha$ and $\chi$ represent different outer automorphisms. Furthermore, $\alpha \chi$ is a symmetry of the twistoid $[\mathcal{TE} \mid c,p,q]$ only if $\alpha$ and $\chi$ are also symmetries. (Note here that $\alpha \chi$ is a half-turn twist whose axis must act like a glide reflection in $\Lambda$, which can happen only if $pq(p-q)=0$. Furthermore, it maps the axis of $\sigma_1$ to a conjugate of the axis of $\sigma$, which implies that $p-q \in \mathbb{Z}$.) Therefore, there are four non-empty families of twistoids in the tetracosm as follows.

\begin{itemize}
  \item $pq(p-q)=0$ and $p-q \in \mathbb{Z}$, allowing symmetries $\alpha$ and $\chi$. Twistoids in this family have $12(p^2+q^2)$ flag-orbits.
  \item $pq(p-q)\ne 0$ and $p-q \in \mathbb{Z}$, allowing symmetry $\alpha$ but no $\chi$. Twistoids in this family have $24(p^2+q^2)$ flag-orbits.
  \item $pq(p-q)=0$ and $p-q \notin \mathbb{Z}$, allowing symmetry $\chi$ but not $\alpha$. Twistoids in this family have $24(p^2+q^2)$ flag-orbits.
  \item $pq(p-q) \ne 0$ and $p-q \notin \mathbb{Z}$, not allowing symmetries $\alpha$ or $\chi$. Twistoids in this family have $48(p^2+q^2)$ flag-orbits.
\end{itemize}

\section{Toroidal Covers}
\label{s:toroids}

As noted in the introduction, the classification of cubic twistoids on the 3-torus (also called equivelar 4-toroids) by symmetry type was completed in~\cite{TOROIDS}.  It is natural to relate this classification to our classifications in Sections~\ref{s:di}, \ref{s:tri}, and \ref{s:tetra}.

When describing the minimal toroidal covers of the twistoid $\mathcal{T} = [\mathcal{DI} \mid c,p_1,p_2,p_3,q_3]$ on the dicosm we first consider when $c\in\mathbb{Z}$, that is, the axes of the standard generators are parallel to the $z$-axis.  Given such a twistoid on the dicosm $\mathcal{T} = [\mathcal{DI} \mid c,p_1,p_2,p_3,q_3] = \mathcal{U} / \mathcal{TR}$, the minimal covering 4-toroid $\mathcal{P}=\mathcal{U} / \Lambda$ can be described by the three generating translations of $\Lambda$ by the vectors $t_1=(0,0,2c)$, $t_2=(2(p_2-p_1),0,0)$, and $t_3=2(p_3-p_1,2q_3,0)$.

We note that all these toroids arise from a vertical translation lattice as defined in Section 6.2 of~\cite{TOROIDS}. Furthermore, the minimal toroidal cover of all twistoids in each of the families described in Table~\ref{t:ugly} depends only on the column and not on the row where the family is located. From Table 2 in~\cite{TOROIDS} it follows that a twistoid in a family appearing in the first column has a minimal toroidal cover $\mathcal{P}$ in class 1 whenever both $p_3=p_1$ and $p_2-p_1=q_3=c$;  the cover is in class 3 otherwise. If the family of $[\mathcal{DI} \mid c,p_1,p_2,p_3,q_3]$ appears in the column labelled $2$, then $\mathcal{P}$ is in class $6_C$. If the family of $[\mathcal{DI} \mid c,p_1,p_2,p_3,q_3]$ appears in the column labelled $2_{0,2}$ then $\mathcal{P}$ is in class $6_A$. If  the family of $[\mathcal{DI} \mid c,p_1,p_2,p_3,q_3]$ appears in the column labelled $2_1$ then $\mathcal{P}$ is in class $6_B$. Finally, if  the family of $[\mathcal{DI} \mid c,p_1,p_2,p_3,q_3]$ appears in the column labelled $4$ then $\mathcal{P}$ is in class $12_A$. This classification implies that the automorphism group of $\mathcal{P}$ acts on its flags with $1,3,6,$ or $12$ orbits.

We now turn our attention to twistoids $[\mathcal{DI} \mid c,p_1,p_2,p_3,q_3]$ in the dicosm with $c\in \frac{\sqrt{2}}{2}\mathbb{Z}$.  Given such a twistoid on the dicosm $\mathcal{T} = [\mathcal{DI} \mid c,p_1,p_2,p_3,q_3] = \mathcal{U} / \mathcal{DI}$, the minimal covering 4-toroid $\mathcal{P}=\mathcal{U} / \Lambda$ can be described by the three generating translations of $\Lambda$ by the vectors $t_1=\sqrt{2}c(1,-1,0)$, $t_2=2(p_2-p_1,p_2-p_1,0)$, and $t_3=2(p_3-p_1,p_3-p_1,q_3)$.  In order to understand these minimal toroidal covers more easily we define the following parameters (as in the notation of~\cite{TOROIDS}).  Let $a:=2(p_3-p_1)$, $b:=q_3$, $c':=2(p_2-p_1)$, and $d:=\sqrt{2}c$.

Recall that there is a symmetry $\gamma_1$ of the twistoid $[\mathcal{DI} \mid c,p_1,p_2,p_3,q_3]$ if and only if either $p_3=p_1$ or $p_3= \frac{p_2+p_1}{2}$.  If $p_3 = p_1$, then $\mathcal{T}$ is generated by the translations $(a,a,0)$, $(d,-d,0)$, and $(0,0,b)$, which is in class $3$ or $6_B$ as in Table 2 of~\cite{TOROIDS}.  On the other hand, if $p_3= \frac{p_2+p_1}{2}$, then $T$ is generated by the translations $(2a,2a,0)$, $(d,-d,0)$, and $(a,a,b)$, which is in class $6_B$ as in Table 4 of~\cite{TOROIDS}.  If $\gamma_1$ is not a symmetry, then $c$ does not divide $2a$, and thus according to Line 2 of Table 6 of~\cite{TOROIDS}, $\mathcal{P}$ is in class $12_B$.  This classification implies that the automorphism group of $\mathcal{P}$ acts on its flags with $3$, $6$ or $12$ orbits.

We next consider toroidal covers of twistoids on the tricosm.  Given such a twistoid $\mathcal{T} = [\mathcal{TR} \mid c,a,b] = \mathcal{U} / \mathcal{TR}$, the minimal covering 4-toroid $\mathcal{P}=\mathcal{U} / \Lambda$ can be described by the three generating translations of $\Lambda$ by the vectors $t_1=c\sqrt{3}(1,1,1)$, $ t_2=(a,b,-a-b)$, and $t_3=(-b,a+b,-a)$.  The last two vectors are determined by finding the smallest translations that send the axis of $\sigma_1$ to the axis of one of its conjugates.  Note that there is a fundamental region for the group $\mathcal{TR}$, different from the one we described, which is a prism over a rhombus, where the rhombus is determined by the two vectors $t_2$ and $t_3$.

There are three distinct cases for these minimal toroidal covers.  First, if $ab=0$, then the toroid $\mathcal{P}$ is described in Line 1 of Table 5 of~\cite{TOROIDS}, and is in class $4$.  Next, if $a-b=0$, then the toroid $\mathcal{P}$ is described in Line 3 of Table 5 of~\cite{TOROIDS}, and is also in class $4$. Note that the translation group described in Line 3 is the same as the one we described, but with different generators.   Finally if $pq(p-q)\ne 0$ then the toroid $\mathcal{P}$ is described in Line 5 of Table 5 of~\cite{TOROIDS}, and is in class $8$.

In each case, the lattice associated with the group $\Lambda$ is invariant under the 3-fold rotation about the line generated by the vector $(1,1,1)$ - that symmetry is denoted by $R_2R_1$ in~\cite{TOROIDS}.   Furthermore, the classification of such 4-toroids implies that the automorphism group of $\mathcal{P}$ acts on its flags with $4$ or $8$ orbits.

Finally, given a twistoid on the tetracosm $\mathcal{T} = [\mathcal{TE} \mid c,p,q] = \mathcal{U} / \mathcal{TE}$, the minimal covering 4-toroid $\mathcal{P}=\mathcal{U} / \Lambda$ can be described by the three generating translations of $\Lambda$ by the vectors $t_1=(0,0,4c)$, $ t_2=(2p,2q,0)$, and $t_3=(2q,-2p,0)$.

We observe that there are two distinct cases for these minimal toroidal covers; either $pq(p-q)=0$ when $\mathcal{P}$ is in class $3$, or $pq(p-q)\ne 0$ when $\mathcal{P}$ is in class $6_C$.  In each case, the lattice associated with the group $\Lambda$ is called a \textit{vertical translation lattice} in~\cite{TOROIDS}, and is invariant under the reflection in the plane $z=0$.  Furthermore, the classification of such 4-toroids implies that the automorphism group of $\mathcal{P}$ acts on its flags with $3,$ or $6$ orbits (see Table 2 of~\cite{TOROIDS}).

Finally, we point out that the minimal toroidal cover of an oriented twistoid (on a manifold other than the torus) never has 24 flag orbits.

\bibliographystyle{amsplain}

\end{document}